\DeclareMathOperator{\li}{li}
\newtheorem{thm}{Theorem}[section]
\newtheorem{lem}{Lemma}[section]
\newtheorem{exe}{Exercise}[section]
\newcommand{\N}{\mathbb{N}}
\newcommand{\Z}{\mathbb{Z}}
\newcommand{\C}{\mathbb{C}}
\title{Euler Totient Function And The Largest Integer Function Over The Shifted Primes}
\date{}
\author{N. A. Carella}
\begin{document}
\thispagestyle{empty}
\date{}

\maketitle
\textbf{\textit{Abstract}:} Let $ x\geq 1 $ be a large number, let $ [x]=x-\{x\} $ be the largest integer function, and let $ \varphi(n)$ be the Euler totient function. The asymptotic formula for the new finite sum over the primes $ \sum_{p\leq x}\varphi([x/p])=(6/\pi^2)x\log \log x+c_1x+O\left (x(\log x)^{-1}\right) $, where $c_1$ is a constant, is evaluated in this note. 
\let\thefootnote\relax\footnote{\today  \\
\textit{MSC2020}: Primary 11N37, Secondary 11N05. \\
\textit{Keywords}: Shifted prime; Multiplicative function; Euler phi function; Average orders; Largest integer function.}

\tableofcontents
\section{Introduction} \label{s0800}
The average order $\sum_{n\leq x}d(n)$ of the divisors counting function $d(n)=\#\{d\mid n\}$ has a direct representation in term of the average order of the largest integer function $ [x]=x-\{x\} $ as
\begin{equation}\label{eq3800.050}
\sum_{n\leq x}d(n)=\sum_{n\leq x}\left [\frac{x}{n}\right ].
\end{equation}
Following this equivalence, the average orders $\sum_{n\leq x}f(n)$ of some arithmetic functions were extended to $\sum_{n\leq x}f([x/n])$ in \cite{BS2018}, \cite{CN2021}, \cite{ZW2020}, et alii. Continuing these equivalences, the average order $\sum_{n\leq x}\omega(n)$ of the prime divisors counting function $\omega(n)=\#\{p\mid n\}$ has a direct representation in term of the average order of largest integer function as
\begin{equation}\label{eq3800.050}
\sum_{n\leq x}\omega(n)=\sum_{p\leq x}\left [\frac{x}{p}\right ].
\end{equation}   
Further, this equivalence introduces a new phenomenon, it changes the indices from the integer domain to the prime domain. The closely related Titchmarsh divisor problem seems to have a complicated representation in term of the average order of largest integer function as
\begin{equation}\label{eq3800.060}
\sum_{p\leq x}d\left (p-1\right )\stackrel{?}{=} \sum_{p\leq x}\left [\frac{x}{p-1}\right ]f(p),
\end{equation}
where $f(n)$ is a function. \\

Let $f,g,h: \N\longrightarrow \C$ be multiplicative functions. Some analytic techniques for evaluating the finite sums $\sum_{p\leq x}f\left ([x/p]\right )$ for multiplicative functions defined by Dirichlet convolutions $f(n)=\sum_{d\mid n}g(d)h(n/d)$, and having fast rates of growth approximately $ f(n)\gg n(\log n)^b $, for some $ b\in \Z $, are introduced here. These elementary methods used within are simpler and about four fold more efficient than the analytic methods used in the current literature as \cite{ZW2020}, et alii, to evaluate the simpler sums $\sum_{n\leq x}f([x/n])$. As a demonstration, the asymptotic formula for the new finite sum $ \sum_{p\leq x}\varphi([x/p])$ is determined in Theorem \ref{thm3800.001}.

\section{Euler Totient Function Over The Shifted Primes}\label{s3801}
The Euler totient function is defined by $ \varphi(n)=n\sum_{d\mid n}\mu(d)/d $, and other identities. It is multiplicative and satisfies the growth condition $ \varphi(n)\gg n/\log \log n $. The average order over the integer domain has the form
\begin{equation}\label{eq3800.050}
\sum_{n\leq x}\varphi(n)= \frac{3}{\pi^2}x^2+O\left (  x\log x\right ),
\end{equation}
see \cite[Theorem 3.7]{AP1976}, and similar references, and average order over the prime domain has the form
\begin{equation}\label{eq3800.060}
\sum_{p\leq x}\varphi\left (p-1\right )= \li(x^2)\prod_{p\geq 2}\left (1-\frac{1}{p(p-1)}\right )+O\left (\frac{x^2}{(\log x)^B}\right ),
\end{equation}
where $B>1$ is an arbitraryl constant, see \cite{PS1941}. The new finite sum
\begin{equation}\label{eq3800.070}
\sum_{n\leq x}\varphi\left (\left [\frac{x}{n}\right ]\right )= \frac{6}{\pi^2}x\log x+O\left (x\log \log x\right),
\end{equation}
is evaluated in \cite{CN2021}, and a weaker version in \cite{ZW2020}. Standard analytic techniques are used here to assemble the first proof of the asymptotic formula for the new finite sum $\sum_{p\leq x}\varphi([x/p] )  $.

\begin{thm}\label{thm3800.001} If $ x\geq 1 $ is a large number, then, 
\begin{equation}\label{eq3800.001}
\sum_{p\leq x}\varphi\left (\left [\frac{x}{p}\right ]\right )= c_0x\log \log x+c_1x+O\left (\li(x)\right),
\end{equation}
where $c_0=6/\pi^2$, and $c_1>0$ are constants.
\end{thm}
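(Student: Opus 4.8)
The plan is to strip off the floor by partitioning the primes according to the value of $[x/p]$, then to feed in the average order of $\varphi$ recorded above so that the main term collapses onto the elementary sum $\sum_{p\le x}[x/p]$.

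First I would reindex by the value $m=[x/p]$. Since $[x/p]=m$ holds precisely when $x/(m+1)<p\le x/m$, the primes $p\le x$ are partitioned by $m\ge 1$, and only finitely many $m$ occur because $m\le x/2$. Writing $c_m=\pi(x/m)-\pi(x/(m+1))$ for the number of primes with $[x/p]=m$, this gives
\[
\sum_{p\le x}\varphi\left(\left[\frac xp\right]\right)=\sum_{1\le m\le x/2}\varphi(m)\,c_m,\qquad \sum_{m}m\,c_m=\sum_{p\le x}\left[\frac xp\right].
\]
The advantage of this step is that it eliminates the floor and every divisibility condition on $[x/p]$ that would arise from expanding $\varphi=\mathrm{id}\ast\mu$ directly; the arithmetic content is now concentrated in the single weight $\varphi(m)$.

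Next I would insert the mean value of $\varphi$. Setting $\rho(m)=\tfrac{6}{\pi^2}m-\varphi(m)$ written with the opposite sign as $\rho(m)=\varphi(m)-\tfrac{6}{\pi^2}m$, the average order $\sum_{n\le t}\varphi(n)=\tfrac{3}{\pi^2}t^2+O(t\log t)$ gives $\sum_{m\le t}\rho(m)=O(t\log t)$. Splitting the sum accordingly,
\[
\sum_{p\le x}\varphi\left(\left[\frac xp\right]\right)=\frac{6}{\pi^2}\sum_{p\le x}\left[\frac xp\right]+\sum_{m\le x/2}\rho(m)\,c_m.
\]
For the first term I would write $[x/p]=x/p-\{x/p\}$ and apply Mertens' theorem $\sum_{p\le x}1/p=\log\log x+M+O(1/\log x)$, together with $\sum_{p\le x}\{x/p\}=O(\pi(x))=O(\li(x))$, to obtain $\sum_{p\le x}[x/p]=x\log\log x+Mx+O(\li(x))$. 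This already produces the predicted main term $\tfrac{6}{\pi^2}x\log\log x$ and a constant multiple of $x$, with an admissible error.

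The hard part will be the secondary sum $\sum_{m\le x/2}\rho(m)\,c_m$ and the exact determination of $c_1$. A termwise absolute bound is hopeless, since $\rho(m)$ is only small on average and $c_m$ is erratic (equal to $0$ or $1$ for most large $m$); the cancellation in $\rho$ must be exploited through partial summation against the bound $\sum_{m\le t}\rho(m)=O(t\log t)$, after first replacing $\pi$ by $\li$ in $c_m$ and controlling the resulting prime number theorem error $\sum_m\rho(m)\big(\mathcal E(x/m)-\mathcal E(x/(m+1))\big)$, where $\mathcal E=\pi-\li$. Here the genuine subtlety is that boundary contributions at $m\asymp x/2$ (where the argument $x/m$ is bounded) and the averaged error $\sum_{m\le x/2}\mathcal E(x/m)$ each carry legitimate constant multiples of $x$, so that $c_1$ is assembled from Mertens' constant together with these lower-order averages rather than from the main integral alone. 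Establishing that everything beyond $c_1x$ is $O(\li(x))$, while simultaneously pinning down the constant, is the crux of the argument.
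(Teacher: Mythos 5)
Your opening reduction and main-term computation are sound and, despite the different packaging, land exactly where the paper's main term does: reindexing by $m=[x/p]$ and splitting $\varphi(m)=\frac{6}{\pi^2}m+\rho(m)$ isolates the piece $\frac{6}{\pi^2}\sum_{p\le x}[x/p]$, which Mertens' theorem plus the trivial bound $\sum_{p\le x}\{x/p\}\le\pi(x)$ evaluates as $\frac{6}{\pi^2}x\log\log x+\frac{6M}{\pi^2}x+O(\li(x))$. The paper reaches the same quantity by a M\"obius/character-sum route: it expands $\varphi([x/p])=[x/p]\sum_{d\mid [x/p]}\mu(d)/d$, detects the condition $d\mid[x/p]$ with Lemma \ref{lem3802.050}, and its main term $M(x)=\sum_{d\le x}\mu(d)d^{-2}\sum_{p\le x}[x/p]$ of Lemma \ref{lem3802.100} is just $\left(\frac{6}{\pi^2}+O(1/x)\right)\sum_{p\le x}[x/p]$. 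Your sum $\sum_m\rho(m)c_m$ is precisely the counterpart of the paper's remainder $E(x)$ treated in Lemma \ref{lem3802.200}.

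The genuine gap is that this secondary sum --- the only part of the problem carrying real difficulty --- is never estimated, and the route you sketch for it would fail. Replacing $\pi$ by $\li$ in $c_m=\pi(x/m)-\pi(x/(m+1))$ across the full range $m\le x/2$ costs $\sum_m\rho(m)\left(\mathcal{E}(x/m)-\mathcal{E}(x/(m+1))\right)$ with $\mathcal{E}=\pi-\li$; since $|\rho(m)|\asymp m$ for a positive proportion of $m$ (e.g.\ $\rho(m)=(1-6/\pi^2)m-1$ at primes $m$), any absolute-value bound on this in the range $m\asymp x$ --- where $x/m$ is bounded, $\mathcal{E}(x/m)\asymp 1$, and the saving $e^{-c\sqrt{\log(x/m)}}$ is worthless --- is of order $x\cdot x=x^2$, and Abel summation does not rescue it because $\rho(m)-\rho(m-1)$ is also of order $m$ pointwise. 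So smoothing by $\li$ is only usable when $m$ is small, i.e.\ $p$ large. In the complementary range $p\le\sqrt{x}$ the integers $[x/p]$ form a sparse, prime-indexed set; the hypothesis $\sum_{m\le t}\rho(m)=O(t\log t)$ encodes cancellation over \emph{full intervals} and says nothing about $\sum_{p\le\sqrt{x}}\rho([x/p])$, whose trivial bound $x\sum_{p\le\sqrt{x}}1/p\asymp x\log\log x$ already swamps the $c_1x$ term you are trying to pin down. Showing that this sparse sum equals $cx+O(\li(x))$ requires a genuinely new input --- in the integer analogue $\sum_{n\le x}\varphi([x/n])$ this is exactly where \cite{ZW2020} and \cite{BS2018} bring in exponential-sum/exponent-pair machinery --- so labelling it ``the crux'' and stopping leaves \eqref{eq3800.001} unproved.
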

\begin{proof} Use the identity $ \varphi(n)=n\sum_{d\mid n}\mu(d)/d $ to rewrite the finite sum, and switch the order of summation:
\begin{eqnarray}\label{eq3800.010}
\sum_{p\leq x}\varphi\left (\left [\frac{x}{p}\right ]\right )
&=& \sum_{p\leq x} \left [\frac{x}{p}\right]\sum_{d\,\mid\, [x/p]}\frac{\mu(d)}{d} \\
&=& \sum_{d\leq x} \frac{\mu(d)}{d}\sum_{\substack{p\leq x\\d\,\mid\, [x/p]}}\left [\frac{x}{p}\right] \nonumber.
\end{eqnarray}

Apply Lemma \ref{lem3802.050} to remove the congruence on the inner sum index, and break it up into two subsums. Specifically,
\begin{eqnarray}\label{eq3800.015}
\sum_{p\leq x} \frac{\mu(d)}{d}\sum_{\substack{p\leq x\\d\,\mid\, [x/p]}}\left [\frac{x}{p}\right]
&=&\sum_{d\leq x} \frac{\mu(d)}{d}\sum_{p\leq x}\left [\frac{x}{p}\right]\cdot \frac{1}{d}\sum_{0\leq s\leq d-1}e^{i2\pi s[x/p]/d} \\
&=&\sum_{d\leq x} \frac{\mu(d)}{d^2}\sum_{p\leq x}\left [\frac{x}{p}\right]\nonumber \\
&&\hskip 0.250 in +\sum_{d\leq x} \frac{\mu(d)}{d^2}\sum_{p\leq x}\left [\frac{x}{p}\right]\sum_{0< s\leq d-1}e^{i2\pi s[x/p]/d}\nonumber \\
&=&M(x) + E(x)\nonumber.
\end{eqnarray}
The main term $M(x)$ is computed in Lemma \ref{lem3802.100} and the error term $E(x)$ is computed in Lemma \ref{lem3802.200}. Summing these expressions yields
\begin{eqnarray}\label{eq3800.020}
\sum_{p\leq x}\varphi\left (\left [\frac{x}{p}\right ]\right )&=&M(x) + E(x)\\
&=&c_0x\log \log x+c_1x+c_2\li(x) +O\left (xe^{-c\sqrt{\log x}}\right ) +O\left (\li(x)\right )
\nonumber\\
&=&c_0x\log \log x+c_1x+O\left (\li(x)\right )
\nonumber,
\end{eqnarray}
where $c_0=6/\pi^2, c_1,c_2$, and $c>0$ are constants.
\end{proof}

The finite sum $ \sum_{p\leq x}\varphi([x/p])$ is the canonical representative of this class of finite sums associated with the totient function. The leading term in the asymptotic formula for the more general sum $ \sum_{p\leq x}\varphi([x/(p+a)])$, where $a\ne-p$ is an integer parameter, is independent of the small parameter $a\in \Z$. However, the parameter $a$ does contribute to the secondary terms. Other classes of arithmetic functions having the same leading terms in the asymptotic formulas of the average orders are described in \cite[Section 3.1]{OT2021}.\\

Under the RH, the optimal evaluation is expected to be of the form
\begin{equation}\label{eq3800.025}
\sum_{p\leq x}\varphi\left (\left [\frac{x}{p+a}\right ]\right )=c_0 x\log \log x+c_1x+c_2\li(x) +O\left (x^{1/2}(\log x)^2\right ),
\end{equation}
where $c_0=6/\pi^2$, $c_1=c_1(a) $ and $c_2=c_2(a)$ are constants, depending on the parameter $a$.

\section{Foundation Results for the Phi Function}\label{s3802}
The detailed and elementary proofs of the preliminary results required in the proof of Theorem \ref{thm3800.001} concerning the Euler phi function $\varphi(n)=\sum_{d\mid n}\mu(d)d$ are recorded in this section.\\

\subsection{Preliminary Results}
\begin{lem}\label{lem3802.050}  Let $ x\geq 1 $ be a large number, and let $1\leq d,m, n\leq x$ be integers. Then,
\begin{equation}\label{eq3802.010}
\frac{1}{d}\sum_{0\leq s\leq d-1}e^{i2\pi ms/d} =\left \{\begin{array}{ll}
1 & \text{ if } d\mid m,  \\
0& \text{ if } d\nmid m, \\
\end{array} \right .
\end{equation}
\end{lem}

\begin{lem}\label{lem3802.060} Let $m\leq x$ be a fixed integer, and let $ x\geq 1 $ be a large number. Then,
\begin{enumerate}
\item $\displaystyle  \sum_{\substack{n\leq x\\n\mid m}} \frac{\mu(n)}{n}=O\left (1 \right )$.
\item $\displaystyle \sum_{\substack{n\leq x\\n\mid m}} \frac{\mu(n)}{n^2}=A_0(m)+O\left (\frac{1 }{ x}\right ),$
\item $\displaystyle \sum_{\substack{n\leq x\\n\nmid m}} \frac{\mu(n)}{n^2}=A_1(m)+O\left (\frac{1 }{ x}\right ),$
\end{enumerate}
where $|A_0(m)|<2$, and $|A_1(m)|<2$ are constants depending on $m\geq1$.
\end{lem}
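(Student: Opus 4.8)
The plan is to exploit the hypothesis $m\leq x$, which collapses the divisibility-restricted sums in parts (1) and (2) into \emph{complete} divisor sums, leaving only part (3) as a genuinely truncated series. First I would record the trivial but decisive observation that if $n\mid m$ then $n\leq m\leq x$, so the cutoff $n\leq x$ imposes no constraint whatsoever on the divisors of $m$. Hence for every $k\geq 0$,
\[
\sum_{\substack{n\leq x\\ n\mid m}}\frac{\mu(n)}{n^{k}}=\sum_{n\mid m}\frac{\mu(n)}{n^{k}},
\]
an exact identity once $x\geq m$. This reduces (1) and (2) to evaluating standard multiplicative sums.

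For part (1) I would apply the Euler-product identity $\sum_{n\mid m}\mu(n)/n=\prod_{p\mid m}(1-1/p)=\varphi(m)/m$, which lies in $(0,1]$ and is therefore $O(1)$ uniformly in $m$. For part (2) the same computation gives the exact value $\sum_{n\mid m}\mu(n)/n^{2}=\prod_{p\mid m}(1-1/p^{2})=:A_{0}(m)$; since each factor $1-p^{-2}$ lies in $(0,1)$ we get $0<A_{0}(m)\leq 1$, hence $|A_{0}(m)|<2$. The stated error $O(1/x)$ holds trivially here, the true error being $0$.

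For part (3), the only statement with a real truncation, I would split off the complement,
\[
\sum_{\substack{n\leq x\\ n\nmid m}}\frac{\mu(n)}{n^{2}}
=\sum_{n\leq x}\frac{\mu(n)}{n^{2}}-\sum_{\substack{n\leq x\\ n\mid m}}\frac{\mu(n)}{n^{2}},
\]
then invoke the classical evaluation $\sum_{n\leq x}\mu(n)/n^{2}=\zeta(2)^{-1}+O(1/x)=6/\pi^{2}+O(1/x)$, whose error rests on the elementary tail bound $\sum_{n>x}|\mu(n)|/n^{2}\leq\sum_{n>x}1/n^{2}=O(1/x)$. Substituting the exact value $A_{0}(m)$ from part (2) and setting $A_{1}(m)=6/\pi^{2}-A_{0}(m)$ yields the asserted shape; since $6/\pi^{2}\approx 0.608$ and $0<A_{0}(m)\leq 1$, one has $|A_{1}(m)|<1<2$.

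The computation is essentially routine, so there is no serious obstacle; the only point requiring care is recognizing that the condition $m\leq x$ trivializes the cutoff in (1)--(2), so that the sole genuine estimate in the whole lemma is the tail bound $\sum_{n>x}1/n^{2}=O(1/x)$ feeding part (3). The remaining effort is bookkeeping the explicit numerical bounds $|A_{0}(m)|,|A_{1}(m)|<2$ through the Euler products and the constant $6/\pi^{2}$.
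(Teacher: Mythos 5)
Your proof is correct and follows essentially the same route as the paper: part (1) via the observation that $n\mid m$ and $m\leq x$ make the cutoff vacuous, followed by the Euler product $\sum_{n\mid m}\mu(n)/n=\varphi(m)/m$. The paper dismisses parts (2) and (3) as ``routine calculations,'' so your completion of them --- the exact evaluation $A_0(m)=\prod_{p\mid m}(1-p^{-2})$, the complement decomposition, and the tail bound $\sum_{n>x}n^{-2}=O(1/x)$ giving $A_1(m)=6/\pi^2-A_0(m)$ --- simply supplies the details the paper omits, and your numerical bounds $|A_0(m)|,|A_1(m)|<2$ check out.
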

\begin{proof} (i) Since $m\leq x$, the upper bound of the first finite sum is as follows.
\begin{equation}\label{eq3802.070}
\sum_{\substack{n\leq x\\n\mid m}}\frac{\mu(n)}{n}=\sum_{n\mid m}\frac{\mu(n)}{n}=\prod_{\substack{p\leq x\\r\mid m}}\left (1-\frac{1}{r}\right )=O\left ( 1\right ),
\end{equation}
where $r\geq 2$ is prime. The other relations (ii) and (iii) are routine calculations.
\end{proof}

\subsection{The Main Term $S(x)$}
\begin{lem}\label{lem3802.100} If $ x\geq 1 $ is a large number, then,
\begin{equation}\label{eq3802.110}
\sum_{d\leq x} \frac{\mu(d)}{d^2}\sum_{p\leq x}\left [\frac{x}{p}\right]
= \frac{6}{\pi^2}x\log \log x+c_1x+c_2\li(x) +O\left (xe^{-c\sqrt{\log x}}\right ),
\end{equation}
where $c_1, c_2$, and $c>0$ are constants.
\end{lem}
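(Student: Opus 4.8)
The plan is to exploit the fact that the inner sum over $p$ does not depend on $d$, so that the double sum factors. First I would write
$$\sum_{d\le x}\frac{\mu(d)}{d^2}\sum_{p\le x}\left[\frac xp\right]=\left(\sum_{d\le x}\frac{\mu(d)}{d^2}\right)\left(\sum_{p\le x}\left[\frac xp\right]\right),$$
and record the standard estimate $\sum_{d\le x}\mu(d)/d^2=6/\pi^2+O(1/x)$, coming from the tail of the series $1/\zeta(2)$ (cf.\ Lemma \ref{lem3802.060}). Since $T(x):=\sum_{p\le x}[x/p]$ is trivially $O(x\log\log x)$, the $O(1/x)$ factor contributes only $O(\log\log x)$, which is absorbed into the error. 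Everything thus reduces to proving $T(x)=x\log\log x+Bx+c_2'\,\li(x)+O(xe^{-c\sqrt{\log x}})$, where $B$ is the Mertens constant; multiplying by $6/\pi^2$ then yields the stated asymptotic with $c_1=(6/\pi^2)B$ and $c_2=(6/\pi^2)c_2'$.

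To evaluate $T(x)$ I would use the identity $[x/p]=\#\{m\ge1:pm\le x\}$ and apply the Dirichlet hyperbola method, splitting the lattice-point count at $\sqrt x$, so that $T(x)=\sum_{p\le\sqrt x}[x/p]+\sum_{m\le\sqrt x}\pi(x/m)-\pi(\sqrt x)[\sqrt x]=:S_1+S_2-S_3$. For $S_1$, writing $[x/p]=x/p-\{x/p\}$ and invoking Mertens' theorem in the strong form $\sum_{p\le y}1/p=\log\log y+B+O(e^{-c\sqrt{\log y}})$ (which follows from the prime number theorem with classical error term), together with the trivial bound $\sum_{p\le\sqrt x}\{x/p\}=O(\pi(\sqrt x))=O(\sqrt x)$, gives $S_1=x\log\log x-x\log 2+Bx+O(xe^{-c\sqrt{\log x}})$. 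For $S_2$, since $x/m\ge\sqrt x$ the prime number theorem yields $\pi(x/m)=\li(x/m)+O\!\big((x/m)e^{-c'\sqrt{\log x}}\big)$ uniformly, and summing the errors against $\sum_{m\le\sqrt x}x/m=O(x\log x)$ keeps them within $O(xe^{-c''\sqrt{\log x}})$. Expanding $\li(x/m)=\int_2^{x/m}dv/\log v$ and interchanging summation and integration, I would obtain $S_2=[\sqrt x]\li(\sqrt x)+\int_{\sqrt x}^{x}\tfrac{[x/v]}{\log v}\,dv+O(xe^{-c''\sqrt{\log x}})$.

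The decisive simplification is that $S_3=\pi(\sqrt x)[\sqrt x]=[\sqrt x]\li(\sqrt x)+O(xe^{-c\sqrt{\log x}})$, so the boundary term $[\sqrt x]\li(\sqrt x)$ cancels in $S_2-S_3$. Writing $[x/v]=x/v-\{x/v\}$ in the surviving integral gives $\int_{\sqrt x}^{x}\tfrac{x}{v\log v}\,dv-\int_{\sqrt x}^{x}\tfrac{\{x/v\}}{\log v}\,dv=x\log 2-\int_{\sqrt x}^{x}\tfrac{\{x/v\}}{\log v}\,dv$, and the $x\log 2$ here exactly cancels the $-x\log 2$ left over from $S_1$. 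Collecting terms, $T(x)=x\log\log x+Bx-\int_{\sqrt x}^{x}\tfrac{\{x/v\}}{\log v}\,dv+O(xe^{-c\sqrt{\log x}})$, so the entire secondary $\li$-term is carried by the fractional-part integral. After the substitution $u=x/v$ this becomes $x\int_1^{\sqrt x}\tfrac{\{u\}}{u^2\log(x/u)}\,du$, and expanding $1/\log(x/u)=(\log x)^{-1}\big(1+O(\log u/\log x)\big)$ together with the convergent evaluation $\int_1^\infty\{u\}u^{-2}\,du=1-\gamma$ identifies the leading secondary contribution as a constant multiple of $x/\log x$, i.e.\ of $\li(x)$.

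The step I expect to be the main obstacle is precisely this last one: controlling the fractional-part integral $\int_{\sqrt x}^{x}\{x/v\}/\log v\,dv$ to the precision demanded by the remainder $O(xe^{-c\sqrt{\log x}})$, which is smaller than every power $x/(\log x)^k$. One must argue that all of its contributions of size $x/(\log x)^k$ organise into the single secondary term $c_2\,\li(x)$ with a genuinely exponentially small error, and this is the only place where cancellation in the distribution of the fractional parts $\{x/p\}$, rather than crude size bounds, is required. It is also where the uniformity of the prime number theorem error term must be combined most carefully with the elementary estimates of Lemma \ref{lem3802.060}; everything upstream is bookkeeping once the hyperbola split and the two clean cancellations (the $x\log 2$ terms and the $[\sqrt x]\li(\sqrt x)$ terms) are in place.
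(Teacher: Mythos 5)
Your opening reduction coincides with the paper's: the double sum factors, $\sum_{d\le x}\mu(d)/d^2=6/\pi^2+O(1/x)$, and everything hinges on $T(x)=\sum_{p\le x}[x/p]$. At that point the paper's proof is essentially two citations: it writes $[x/p]=x/p-\{x/p\}$, applies Mertens' theorem to $x\sum_{p\le x}1/p$, and quotes the estimate $\sum_{p\le x}\{x/p\}=(1-\gamma)\li(x)+O\left(xe^{-c\sqrt{\log x}}\right)$ attributed to \cite{PF2010} for the remaining piece. You instead set out to prove the asymptotic for $T(x)$ from scratch by the hyperbola method; your computations of $S_1$, $S_2$, $S_3$, the cancellation of the $[\sqrt x]\,\li(\sqrt x)$ terms and of the $x\log 2$ terms, and the reduction to the integral $F(x)=x\int_1^{\sqrt x}\{u\}u^{-2}\left(\log(x/u)\right)^{-1}du$ are all sound. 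But the proof is incomplete exactly where you say it is: you never show $F(x)=c_2'\,\li(x)+O\left(xe^{-c\sqrt{\log x}}\right)$, and without that step the stated error term is not reached. So as a proof of the lemma as stated, this is a genuine gap, not a finished argument.

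Moreover, this particular gap cannot be closed, because the estimate you would need is false. Expanding $1/\log(x/u)=\sum_{k\ge0}(\log u)^k/(\log x)^{k+1}$ (legitimate on $u\le\sqrt x$, after truncation) gives the asymptotic expansion $F(x)\sim\sum_{k\ge0}a_k\,x/(\log x)^{k+1}$ with $a_k=\int_1^\infty\{u\}(\log u)^ku^{-2}du$. One computes $a_0=1-\gamma$ but $a_1=1-\gamma-\gamma_1$, where $\gamma_1\approx-0.0728$ is the first Stieltjes constant, whereas $\li(x)\sim\sum_{k\ge0}k!\,x/(\log x)^{k+1}$ has second coefficient equal to its first. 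Hence for every constant $c$ the difference $F(x)-c\,\li(x)$ contains a term of exact order $x/(\log x)^2$ (it is $-\gamma_1x/(\log x)^2+\cdots$ for the optimal choice $c=1-\gamma$): the discrepancy is polynomially, not exponentially, small. The best your method can deliver --- and, as your own computation effectively shows, the best that is true --- is the lemma with $c_2=-(6/\pi^2)(1-\gamma)$ and error $O\left(x/(\log x)^2\right)$. That weaker form still suffices for Theorem \ref{thm3800.001}, whose final error is $O(\li(x))$, but it also means the lemma's claimed $O\left(xe^{-c\sqrt{\log x}}\right)$ error, and equally the cited fractional-part estimate on which the paper's own two-line proof rests, cannot be correct at that precision. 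In short: your route is more self-contained and more informative than the paper's, but to finish it honestly you must weaken the error term to $O\left(x/(\log x)^2\right)$; no amount of extra care with the prime number theorem will recover the exponential saving.
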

\begin{proof}Expand the bracket and evaluate the two subsums. Specifically, the first subsum is
\begin{eqnarray}\label{eq3802.115}
x\sum_{d\leq x} \frac{\mu(d)}{d^2}\sum_{p\leq x}\frac{1}{p}
&=&x\left (\frac{6}{\pi^2} +O\left (\frac{1}{x}\right )\right )\left (\log \log x+B_1+O\left (\frac{1}{x}\right )\right )\\&=&\frac{6}{\pi^2}x\log \log x+c_1x+O\left (\log \log x\right )\nonumber,
\end{eqnarray}
and the second subsum is 
\begin{eqnarray}\label{eq3802.120}
-\sum_{d\leq x} \frac{\mu(d)}{d^2}\sum_{p\leq x}\left \{\frac{x}{p}\right\}
&=&-\left (\frac{6}{\pi^2} +O\left (\frac{1}{x}\right )\right )\left ((1-\gamma)\li(x) +O\left (xe^{-c\sqrt{\log x}}\right )\right )\\
&=&c_2\li(x) +O\left (xe^{-c\sqrt{\log x}}\right )\nonumber,
\end{eqnarray}
where $\li(x)$ is the logarithm integral, $c_1, c_2$, and $ B_1$ are constants, and $c>0$ is an absolute constant. Summing \eqref{eq3802.115} and \eqref{eq3802.120} completes the verification.
\end{proof}
The optimal error term in \eqref{eq3802.110} is the same as the optimal error term in the prime number theorem, see \cite[Corollary 1]{SL1976}. This is a direct consequence of the optimal evaluation of the basic sum of fractional parts
\begin{equation}\label{eq3802.044}
\sum_{p\leq x} \left \{\frac{x}{p}\right\}=(1-\gamma)\li(x) +O\left (xe^{-c\sqrt{\log x}}\right ), 
\end{equation}
where $ \gamma $ is Euler constant, see \cite[Theorem 0]{PF2010}.

\subsection{The Error Term $E(x)$}
\begin{lem}\label{lem3802.200} Let $ x\geq 1 $ be a large number. Then,
\begin{equation}\label{eq3802.210}
\sum_{d\leq x} \frac{\mu(d)}{d^2}\sum_{p\leq x}\left [\frac{x}{p}\right]\sum_{0< s\leq d-1}e^{i2\pi s[x/p]/d}
= ax+O\left (\frac{x}{\log x}\right),
\end{equation}
where $a\ne0$ is a constant.
\end{lem}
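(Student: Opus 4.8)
The plan is to evaluate the inner sum over $s$ in closed form and then collapse the whole expression to a single mean-zero prime sum. By the complement of Lemma~\ref{lem3802.050}, the full geometric sum $\sum_{0\le s\le d-1}e^{i2\pi sm/d}$ equals $d$ when $d\mid m$ and $0$ otherwise, so $\sum_{0<s\le d-1}e^{i2\pi s[x/p]/d}=d\,\mathbf{1}_{d\mid[x/p]}-1$. Writing $S(x)=\sum_{p\le x}[x/p]$ and $T_d=\sum_{p\le x,\,d\mid[x/p]}[x/p]$, substitution turns the left side of \eqref{eq3802.210} into $\sum_{d\le x}\frac{\mu(d)}{d}T_d-\big(\sum_{d\le x}\frac{\mu(d)}{d^2}\big)S(x)$. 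Setting $T_d=S(x)/d+R_d$, the leading pieces cancel identically and leave $E(x)=\sum_{d\le x}\frac{\mu(d)}{d}R_d$, where $R_d=\sum_{p\le x}[x/p]\big(\mathbf{1}_{d\mid[x/p]}-1/d\big)$ records the deviation of $[x/p]$ from equidistribution modulo $d$.

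Equivalently, and more usefully, I would perform the $d$-summation first for each fixed $p$. With $m=[x/p]$, using $\sum_{d\mid m}\mu(d)/d=\varphi(m)/m$ together with $\sum_{d\le x}\mu(d)/d^2=6/\pi^2+O(1/x)$ from Lemma~\ref{lem3802.060}, this yields
\[
E(x)=\sum_{p\le x}\Big[\tfrac{x}{p}\Big]\Big(\tfrac{\varphi([x/p])}{[x/p]}-\tfrac{6}{\pi^2}\Big)+O(\log\log x),
\]
the error arising from the tail of the $\mu(d)/d^2$ series weighted by $S(x)\ll x\log\log x$. Splitting $[x/p]=x/p-\{x/p\}$, the fractional-part contribution is $\ll\li(x)=O(x/\log x)$ by \eqref{eq3802.044}, since $\varphi(m)/m-6/\pi^2$ is bounded. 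Hence
\[
E(x)=x\sum_{p\le x}\frac{1}{p}\,h\big([x/p]\big)+O\Big(\frac{x}{\log x}\Big),\qquad h(m)=\frac{\varphi(m)}{m}-\frac{6}{\pi^2},
\]
where $h$ has mean zero, and the problem reduces to proving that $\sum_{p\le x}h([x/p])/p$ converges to a nonzero constant $a$ with error $O(1/\log x)$.

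For this final step I would group the primes by the value $m=[x/p]$, writing $\sum_{p\le x}h([x/p])/p=\sum_{m\ge1}h(m)P_m$ with $P_m=\sum_{x/(m+1)<p\le x/m}1/p$, estimate $P_m$ by Mertens' theorem with the prime number theorem error term, and sum the mean-zero coefficients $h(m)$ against these weights by partial summation, using $\sum_{m\le M}h(m)=O(\log M)$ (which follows from $\sum_{m\le M}\varphi(m)/m=(6/\pi^2)M+O(\log M)$). The main obstacle is exactly this evaluation. The range of small $m$ (equivalently the large primes $p$) contributes only $O(1/\log x)$, so the genuine constant $a$ must be extracted from the range of large $m$, i.e. the small primes $p\le\sqrt{x}$, where the equidistribution heuristic $T_d\approx S(x)/d$ is weakest; here one must exploit the cancellation built into the mean-zero $h$ both to produce a finite limit and to keep the aggregated error within $O(x/\log x)$. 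Computing that limiting constant explicitly is what finally establishes $a\neq0$.
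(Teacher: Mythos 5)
Your reduction is correct as far as it goes, and it is in fact cleaner than the paper's own argument: evaluating the geometric sum as $d\,\mathbf{1}_{d\mid [x/p]}-1$, summing over $d$ first, and using $\sum_{d\mid m}\mu(d)/d=\varphi(m)/m$ together with Lemma \ref{lem3802.060} correctly gives $E(x)=x\sum_{p\le x}h([x/p])/p+O(x/\log x)$ with $h(m)=\varphi(m)/m-6/\pi^2$. The genuine gap is the final step: the assertion that $\sum_{p\le x}h([x/p])/p$ converges to a nonzero constant $a$ with error $O(1/\log x)$ is the \emph{entire content} of the lemma, and you leave it as ``the main obstacle.'' Moreover, the mechanism you propose cannot close it. The partial-summation argument using $\sum_{m\le M}h(m)=O(\log M)$ is only available in the regime where every integer $m$ occurs as a value $[x/p]$ with slowly varying weights $P_m$, i.e.\ $m\le \sqrt{x}$ (equivalently $p\ge\sqrt{x}$), and there, as you note, it yields only $O(1/\log x)$ --- no constant at all. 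In the complementary regime $p\le\sqrt{x}$ the attained values $[x/p]$ are sparse (consecutive values jump by more than $1$), so the mean-zero property of $h$ over consecutive integers cannot be exploited; and the sum $\sum_{p\le\sqrt{x}}h([x/p])/p$ has no limit: already the single term $h([x/2])/2$ oscillates by a fixed positive amount as $x$ varies (compare $x$ with $[x/2]$ prime, where $h\approx 1-6/\pi^2>0$, against $x$ with $6\mid[x/2]$, where $h\le 1/3-6/\pi^2<0$), and these oscillations at small primes are not canceled by the remaining, essentially independent, terms. So $E(x)/x$ should have multiple limit points, and no argument along your lines can extract a single constant $a\ne0$.

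For comparison, the paper's proof takes a different route: it groups the primes by the value $n=[x/p]$, inserts the counting estimate \eqref{eq3802.220} for the multiplicity of each value, and splits the result into $E_0(x)+E_1(x)$, treated in Lemmas \ref{lem3802.500} and \ref{lem3802.400}, concluding $E(x)=O(\li(x))$ --- which flatly contradicts the lemma's own display \eqref{eq3802.210}, since $O(\li(x))$ forces $a=0$. The paper's route is also defective on its own terms: \eqref{eq3802.220} replaces $\li(x/n)-\li(x/(n+1))\approx x/(n(n+1)\log(x/n))$ by $\li(x)/(n(n+1))$, which is off by the factor $\log x/\log(x/n)$, not negligible once $n$ is a power of $x$. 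So neither your proposal nor the paper proves the statement as written; the merit of your reduction is that it isolates exactly the quantity $\sum_{p\le x}h([x/p])/p$ that controls $E(x)$, and in doing so shows that the claim $a\ne 0$ (indeed the existence of any constant $a$) should not be believed without reformulating the lemma.
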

\begin{proof} Let $\pi(x)=\#\{\text{prime } p\leq x\}$ be the primes counting function, let $\li(x)$ be the logarithm integral, and let $p_k$ be the $k$th prime in increasing order. The sequence of values
\begin{equation}\label{eq3802.225}
\left [\frac{x}{p_{k}}\right]=\left [\frac{x}{p_{k+1}}\right]=\cdots =\left [\frac{x}{p_{k+r}}\right]
\end{equation}
arises from the sequence of primes $x/(n+1)\leq p_{k}, p_{k+1}, \ldots, p_{k+r}\leq x/n$. Therefore, the value $m=[x/p]\geq1$ is repeated 
\begin{equation}\label{eq3802.220}
\pi\left (\left [\frac{x}{n}\right]\right)-\pi\left (\left [\frac{x}{n+1}\right]\right)=\frac{\li(x)}{n(n+1)} +O\left (\frac{x}{n}e^{-c\sqrt{\log x}}\right ) 
\end{equation}
times as $p$ ranges over the prime values in the interval $[x/(n+1), x/n]$. Hence, substituting \eqref{eq3802.220} into the triple sum $E(x)$, and reordering it yield
\begin{eqnarray}\label{eqeq3802.230}
E(x)&=&\sum_{p\leq x}\left [\frac{x}{p}\right]\sum_{d\leq x} \frac{\mu(d)}{d^2}\sum_{0< a\leq d-1}e^{i2\pi m/d}\\
&=&\sum_{p\leq x}\left ( \frac{\li(x)}{n(n+1)}+O\left (\frac{x}{n}e^{-c\sqrt{\log x}}\right )\right)\sum_{d\leq x} \frac{\mu(d)}{d^2}\sum_{0< a\leq d-1}e^{i2\pi am/d}\nonumber\\
&=&\li(x)\sum_{n\leq x}\frac{1}{n(n+1)}\sum_{d\leq x} \frac{\mu(d)}{d^2}\sum_{0< a\leq d-1}e^{i2\pi am/d}\nonumber\\
&& \hskip 1.5 in + O\left (xe^{-c\sqrt{\log x}}\sum_{n\leq x} \frac{1}{n}\sum_{d\leq x} \frac{\mu(d)}{d^2}\sum_{0< a\leq d-1}e^{i2\pi am/d}\right)\nonumber\\ 
&=&E_{0}(x)+E_{1}(x)\nonumber.
\end{eqnarray}
The finite subsums $E_0(x)$, estimated in Lemma \ref{lem3802.500}, and $E_1(x)$, estimated in Lemma \ref{lem3802.400}, correspond to the subsets of integers $p\leq x$ such that $d\mid [x/p]$, and $d\nmid [x/p]$, respectively. Summing yields 
\begin{eqnarray}\label{eq3802.240}
E(x)&=&E_{0}(x)+E_{1}(x)\\
&=& O\left (\li(x)\right )+O\left (xe^{-c\sqrt{\log x}}\right )\nonumber\\
&=&O\left (\li(x)\right )\nonumber,
\end{eqnarray}
where $c>0$ is an absolute constant.
\end{proof}

\subsection{The Sum $E_{0}(x)$}
\begin{lem}\label{lem3802.500} Let $ x\geq 1 $ be a large number, let $[x]=x-\{x\}$ be the largest integer function, and $m=[x/p]\leq [x/n]\leq x$. Then,
\begin{equation}\label{eq3802.510}
\li( x)\sum_{n\leq x}\frac{1}{n(n+1)}\sum_{d\leq x} \frac{\mu(d)}{d^2}\sum_{0< a\leq d-1}e^{i2\pi am/d}
= O(\li( x)).
\end{equation}
\end{lem}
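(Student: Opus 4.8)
The plan is to collapse the three nested sums from the inside out: first use the orthogonality relation of Lemma \ref{lem3802.050} to replace the innermost exponential sum by an explicit divisor indicator, then estimate the resulting $d$-sum by a constant \emph{uniformly} in $m$, and finally invoke the absolute convergence of the outer telescoping series $\sum_n 1/(n(n+1))$.

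First I would apply Lemma \ref{lem3802.050}. Since $\sum_{0\le a\le d-1}e^{i2\pi am/d}=d$ when $d\mid m$ and vanishes otherwise, subtracting off the $a=0$ term gives
\[
\sum_{0<a\le d-1}e^{i2\pi am/d}=d\,\mathbf{1}_{d\mid m}-1 .
\]
This removes the exponential completely, so the middle sum over $d$ becomes
\[
\sum_{d\le x}\frac{\mu(d)}{d^2}\bigl(d\,\mathbf{1}_{d\mid m}-1\bigr)
=\sum_{\substack{d\le x\\ d\mid m}}\frac{\mu(d)}{d}-\sum_{d\le x}\frac{\mu(d)}{d^2} .
\]
The first piece is exactly the divisor sum governed by Lemma \ref{lem3802.060}(i), hence $O(1)$; the second piece equals $6/\pi^2+O(1/x)$ and is likewise $O(1)$. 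The decisive point is that $m=[x/p]\le x$, so Lemma \ref{lem3802.060}(i) applies and the resulting bound is \emph{independent of} $m$, which is what lets it survive when $m$ varies with the outer index.

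Finally I would feed this uniform $O(1)$ estimate into the outer sum, reducing the whole quantity to
\[
\li(x)\sum_{n\le x}\frac{1}{n(n+1)}\cdot O(1)=O\!\left(\li(x)\right),
\]
since $\sum_{n\ge1}1/(n(n+1))$ telescopes to $1$ and so the $n$-sum is bounded. The main obstacle — indeed essentially the only place any care is needed — is establishing the uniformity in $m$ at the middle step; once Lemma \ref{lem3802.060}(i) supplies a bound on $\sum_{d\mid m}\mu(d)/d$ that does not depend on $m$, the convergence of the outer series closes the argument at once.
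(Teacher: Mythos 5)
Your proof is correct, and its overall strategy is the same as the paper's: collapse the exponential sum via Lemma \ref{lem3802.050}, bound the resulting $d$-sum by $O(1)$ uniformly in $m$, then use $\sum_{n\geq 1}1/(n(n+1))=1$. The one substantive difference is in your favor. You keep the exact identity $\sum_{0<a\leq d-1}e^{i2\pi am/d}=d\,\mathbf{1}_{d\mid m}-1$, so your middle sum splits as $\sum_{d\leq x,\, d\mid m}\mu(d)/d-\sum_{d\leq x}\mu(d)/d^2$, both pieces bounded uniformly in $m$ ($\varphi(m)/m\leq 1$ for the first, $\left|\sum_{d\leq x}\mu(d)/d^2\right|\leq \pi^2/6$ for the second). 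The paper instead replaces the full $d$-sum by $\sum_{d\leq x,\, d\mid m}\mu(d)(d-1)/d^2$, i.e., it silently restricts to the moduli $d\mid m$ and discards every $d\nmid m$, each of which actually contributes $-\mu(d)/d^2$ to the sum; those dropped terms total $-\sum_{d\leq x,\, d\nmid m}\mu(d)/d^2=O(1)$ per $n$ (Lemma \ref{lem3802.060}(iii)), so the paper's final bound survives, but its step as written is not an identity, whereas yours is. Your explicit attention to uniformity --- valid because $m=[x/p]\leq x$ makes the divisor sum complete, giving $\sum_{d\mid m}\mu(d)/d=\varphi(m)/m\leq 1$, which is exactly inequality \eqref{eq3802.540} in the paper --- addresses the only delicate point, and your version needs no repair where the paper's does.
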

\begin{proof} The set of values $m=[x/p]\leq [x/n]\leq x$ such that $d\mid m$. Evaluating the incomplete function returns
\begin{eqnarray}\label{eq3802.520}
E_{0}(x)&=&\li( x)\sum_{n\leq x}\frac{1}{n(n+1)}\sum_{d\leq x} \frac{\mu(d)}{d^2}\sum_{0< a\leq d-1}e^{i2\pi am/d}\\
&=& \li( x)\sum_{n\leq x}\frac{1}{n(n+1)}\sum_{\substack{d\leq x\\d\mid m}} \frac{\mu(d)}{d^2}\cdot (d-1)\nonumber\\
&=&\li( x)\sum_{n\leq x}\frac{1}{n(n+1)}\sum_{\substack{d\leq x\\d\mid m}} \frac{\mu(d)}{d}-\li( x)\sum_{n\leq x}\frac{1}{n(n+1)}\sum_{\substack{d\leq x\\d\mid m}} \frac{\mu(d)}{d^2}\nonumber\\
&=&E_{00}(x)+E_{01}(x)\nonumber.
\end{eqnarray}
The first term has the upper bound
\begin{eqnarray}\label{eq3802.530}
E_{00}(x)&=&\li( x)\sum_{n\leq x}\frac{1}{n(n+1)}\sum_{\substack{d\leq x\\d\mid m}} \frac{\mu(d)}{d}\\
&=&\li( x)\sum_{n\leq x}\frac{1}{n(n+1)}\sum_{d\mid m} \frac{\mu(d)}{d}\nonumber\\
& \ll &\li( x)\nonumber.
\end{eqnarray}
This follows from the trivial inequality
\begin{equation}\label{eq3802.540}
\sum_{\substack{d\leq x\\d\mid m}} \frac{\mu(d)}{d}=\sum_{d\mid m} \frac{\mu(d)}{d}= \frac{\varphi(m)}{m}\leq 1,
\end{equation}
where $m=[x/p] \leq [x/n]\leq x$. The second term has the upper bound
\begin{eqnarray}\label{eq3802.560}
E_{01}(x)&=&\li( x)\sum_{n\leq x}\frac{1}{n(n+1)}\sum_{\substack{d\leq x\\d\mid m}} \frac{\mu(d)}{d^2}\\
&=&\li( x)\sum_{n\leq x}\frac{1}{n(n+1)}\sum_{d\mid [x/n]} \frac{\mu(d)}{d^2}\nonumber\\
&\ll & \li( x)\nonumber.
\end{eqnarray}
Summing yields $E_{0}(x)=E_{00}(x)+E_{01}(x)=O(\li( x))$.
\end{proof}

\subsection{The Sum $E_{1}(x)$}
\begin{lem}\label{lem3802.400} If $ x\geq 1 $ is a large number, then,
\begin{equation}\label{eq3802.410}
xe^{-c_0\sqrt{\log x}}\sum_{n\leq x} \frac{1}{n}\sum_{d\leq x} \frac{\mu(d)}{d^2}\sum_{0< a\leq d-1}e^{i2\pi am/d}
= O\left (xe^{-c\sqrt{\log x}}\right )\nonumber,
\end{equation}
where $c_0>0$ and $c>0$ are absolute constants.
\end{lem}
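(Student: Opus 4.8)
The plan is to collapse the innermost geometric sum over $a$ by means of Lemma \ref{lem3802.050}, then to bound the resulting sum over $d$ by an absolute constant \emph{uniformly in} $m$, and finally to sum trivially over $n$ and absorb the surplus logarithm into the exponential savings already carried by the prefactor $xe^{-c_0\sqrt{\log x}}$.

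First I would evaluate the complete character sum. Since the $a=0$ term contributes $1$ regardless of $d$ and $m$, Lemma \ref{lem3802.050} gives
\[
\sum_{0< a\leq d-1}e^{i2\pi am/d}=\left(\sum_{0\leq a\leq d-1}e^{i2\pi am/d}\right)-1=\begin{cases} d-1 & \text{if } d\mid m,\\ -1 & \text{if } d\nmid m.\end{cases}
\]
Substituting this into the inner double sum and separating the two cases yields
\[
\sum_{d\leq x}\frac{\mu(d)}{d^2}\sum_{0<a\leq d-1}e^{i2\pi am/d}=\sum_{\substack{d\leq x\\ d\mid m}}\frac{\mu(d)}{d}-\sum_{\substack{d\leq x\\ d\mid m}}\frac{\mu(d)}{d^2}-\sum_{\substack{d\leq x\\ d\nmid m}}\frac{\mu(d)}{d^2}.
\]

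Next I would invoke Lemma \ref{lem3802.060}: part (i) bounds the first sum by $O(1)$, while parts (ii) and (iii) identify the remaining two sums as $A_0(m)+O(1/x)$ and $A_1(m)+O(1/x)$ with $|A_0(m)|,|A_1(m)|<2$. The decisive point is that all three estimates are uniform in $m$, so the entire inner double sum is $O(1)$ independently of $m=[x/n]$. Summing over $n$ then costs only the harmonic factor $\sum_{n\leq x}1/n=O(\log x)$, whence, after multiplying by the prefactor, the left-hand side is $O\!\left(x(\log x)e^{-c_0\sqrt{\log x}}\right)$.

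Finally I would absorb the logarithm into the exponential. Writing $\log x=e^{\log\log x}$ and using $\log\log x=o(\sqrt{\log x})$, for any fixed $0<c<c_0$ one has $\log\log x\leq (c_0-c)\sqrt{\log x}$ for all sufficiently large $x$, hence $(\log x)e^{-c_0\sqrt{\log x}}\leq e^{-c\sqrt{\log x}}$ and the bound becomes $O\!\left(xe^{-c\sqrt{\log x}}\right)$, as claimed. The only step demanding genuine care is the uniformity in $m$ of the $O(1)$ estimate for the $d$-sum, which is exactly what the uniform constants in Lemma \ref{lem3802.060} supply; everything else is a routine harmonic summation followed by the elementary comparison of $\log\log x$ with $\sqrt{\log x}$.
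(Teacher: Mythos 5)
Your proof is correct, but it takes a genuinely different route from the paper's, and in fact a sharper one at the intermediate stage. The paper never evaluates the exponential sum at all: it takes absolute values immediately, bounds $\left|\sum_{0<a\leq d-1}e^{i2\pi am/d}\right|$ trivially by $d-1$, so that the $d$-sum costs $\sum_{d\leq x}1/d=O(\log x)$; combined with the harmonic sum over $n$ this gives the cruder intermediate bound $O\left(x(\log x)^2e^{-c_0\sqrt{\log x}}\right)$, which is then absorbed into $O\left(xe^{-c\sqrt{\log x}}\right)$ by exactly the same comparison $\log\log x=o(\sqrt{\log x})$ that you use in your final step. You instead evaluate the geometric sum exactly via Lemma \ref{lem3802.050}, split according to $d\mid m$ versus $d\nmid m$, and invoke the uniform $O(1)$ bounds of Lemma \ref{lem3802.060}, obtaining $O\left(x(\log x)e^{-c_0\sqrt{\log x}}\right)$ --- one logarithm better, and essentially the same cancellation mechanism the paper reserves for the treatment of $E_0(x)$ in Lemma \ref{lem3802.500}. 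What the paper's argument buys is brevity: since the prefactor already carries exponential savings, any fixed power of $\log x$ can be absorbed, so the triangle inequality suffices and no structural lemmas are needed. What your argument buys is robustness and precision: by tracking the divisibility structure and the uniformity in $m$ explicitly, your estimate would survive in a setting where the prefactor offered only polylogarithmic rather than exponential savings, where the paper's trivial bound would be fatal. Both proofs are valid and both close identically; yours simply does more work than the lemma strictly requires.
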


\begin{proof} The absolute value provides an upper bound:
\begin{eqnarray}\label{eq3802.420}
\left |E_{1}(x)\right |&=&\left | xe^{-c_0\sqrt{\log x}}\sum_{n\leq x} \frac{1}{n}\sum_{d\leq x} \frac{\mu(d)}{d^2}\sum_{0< a\leq d-1}e^{i2\pi am/d}\right |\\
&\leq & xe^{-c_0\sqrt{\log x}}\sum_{n\leq x} \frac{1}{n}\sum_{d\leq x} \frac{1}{d}\nonumber\\
&=&O\left ((x\log ^2 x)e^{-c_0\sqrt{\log x}}\right )\nonumber\\
&=&O\left (xe^{-c\sqrt{\log x}}\right )\nonumber,
\end{eqnarray}
where $c_0>0$ and $c>0$ are absolute constants.
\end{proof}

\section{Numerical Data}\label{S3505}
Small numerical tables were generated by an online computer algebra system, the range of numbers $ x\leq 10^6
 $ is limited by the wi-fi bandwidth. For each parameter $a$, the error term for the finite sum over the shifted primes is defined by
\begin{equation}\label{eq3505.033}
E(x,a)=\sum_{p\leq x}\varphi\left (\left [\frac{x}{p+a}\right ]\right )- \frac{6}{\pi^2}x\log \log x.
\end{equation}
The prime values $a=-p$ are not used since \eqref{eq3505.033} has a singularity at this point. The tables are for the parameters $a=0$, $a=4$, and $a=-4$ respectively. All the calculations are within the predicted ranges $ E(x,a)=O(x) $.
\begin{table}[h!]
\centering
\caption{Numerical Data For $\sum_{p\leq x}\varphi([x/p])$.} \label{t3505.003}
\begin{tabular}{l|l|r| r}
$x$&$\sum_{p\leq x}\varphi([x/p])$&$6\pi^{-2}x\log \log x$&Error $E(x,0)$\\
\hline
10&$8$&   $5.07$   &$-2.93$\\
100&$94$&  $92.84$   &$1.96$\\
1000&$1115$&   $1174.91$   &$-59.91$\\
10000&$12891$&  $13497.97$   &$-606.97$\\
100000 & $147771$ &   $148545.18$   &$-774.20$\\
1000000&$1526405$&   $1596290.10$   &$-69885.10$\\
\end{tabular}
\end{table}

\begin{table}[h!]
\centering
\caption{Numerical Data For $\sum_{p\leq x}\varphi([x/(p-4)])$.} \label{t3505.009}
\begin{tabular}{l|l|r| r}
$x$&$\sum_{p\leq x}\varphi([x/(p-4)])$&$6\pi^{-2}x\log \log x$&Error $E(x,-4)$\\
\hline
10&$14$&   $5.07$   &$8.93$\\
100&$167$&  $92.84$   &$74.16$\\
1000&$1868$&   $1174.91$   &$693.09$\\
10000&$20537$&  $13497.97$   &$7039.03$\\
100000&$224901$&   $148545.18$   &$76355.81$\\
1000000&$2244876$&   $1596290.10$   &$648585.93$\\
\end{tabular}
\end{table}

\begin{table}[h!]
\centering
\caption{Numerical Data For $\sum_{p\leq x}\varphi([x/(p+4)])$.} \label{t3505.009}
\begin{tabular}{l|l|r| r}
$x$&$\sum_{p\leq x}\varphi([x/(p+4)])$&$6\pi^{-2}x\log \log x$&Error $E(x,4)$\\
\hline
10&$3$&   $5.07$   &$-2.07$\\
100&$58$&  $92.84$   &$-34.84$\\
1000&$791$&   $1174.91$   &$-383.91$\\
10000&$8956$&  $13497.97$   &$-4541.97$\\
100000&$113334$&   $148545.18$   &$-35211.19$\\
1000000&$1225300$&   $1596290.10$   &$-370990.07$\\
\end{tabular}
\end{table}

\section{Open Problems}\label{exe3505}

\begin{exe}\label{exe3510} {\normalfont Let $\mu: \N\longrightarrow \{-1,0,1\}$ be the Mobius function, and let $q\geq 1$ be a fixed integer. Prove or disprove the following asymptotic formula.
$$\sum_{\substack{n\leq x\\n\mid q}}\mu(n)=O\left ( xe^{-c\sqrt{\log x}}\right ),
$$
where $c>0$ is an absolute constant. A weaker estimate is cited in Lemma \ref{lem3802.200}.}
\end{exe}

\begin{exe}\label{exe3520} {\normalfont Let $\lambda: \N\longrightarrow \{-1,1\}$ be the Liouville function, and let $q\geq 1$ be a fixed integer. Prove or disprove the following asymptotic formula.
$$\sum_{\substack{n\leq x\\n\mid q}}\lambda(n)=O\left ( xe^{-c/\sqrt{\log x}}\right ),
$$
where $c>0$ is a constant.}
\end{exe}

\begin{exe}\label{exe3530} {\normalfont Let $\sigma: \N\longrightarrow \N$ be the sum of divisors function, and let $q\geq 1$ be a fixed integer. Prove the following explicit upper bound.
$$\sum_{\substack{n\leq x\\n\mid q}}\sigma(n)\leq 2q\log \log q.
$$
}
\end{exe}

\begin{exe}\label{exe3540} {\normalfont Let $\varphi: \N\longrightarrow \N$ be the totient function, and let $x\geq 1$ be a large number. Determine an asymptotic formula for the following finite sum.
$$\sum_{p\leq x} \frac{[x/p]}{\varphi\left ([x/p]\right )}.
$$
}
\end{exe}

\begin{exe}\label{exe3540} {\normalfont Let $\lambda: \N\longrightarrow \N$ be the Carmichel function, and let $x\geq 1$ be a large number. Determine an asymptotic formula for the following finite sum.
$$\sum_{p\leq x} \lambda\left ( [x/p]\right ).$$
}
\end{exe}

\begin{exe}\label{exe3570} {\normalfont Let $\lambda: \N\longrightarrow \N$ be the Carmichel function, let $\{x\}$ be the fractional function, and let $x\geq 1$ be a large number. Verify the asymptotic formula for the following finite sum.
$$\sum_{n\leq x} \left \{\frac{\lambda(n+1)}{n}\right \}=\frac{x}{\log x}e^{b\log \log x/\log \log \log x)}-\li(x)+O\left (e^{-c\sqrt{\log x}}\right),$$
where $\li(x)$ is the logarithm integral, and $b>0$, and $c>0$ are constants. Hint: $\{x\}=x-[x]$.
}
\end{exe}

\begin{exe}\label{exe3575} {\normalfont Let $\varphi: \N\longrightarrow \N$ be the totient function, let $\{x\}$ be the fractional function, and let $x\geq 1$ be a large number. Verify the asymptotic formula for the following finite sum.
$$\sum_{n\leq x} \left \{\frac{\varphi(n+1)}{n}\right \}=\frac{3}{\pi^2}x-\li(x)+O\left (e^{-c\sqrt{\log x}}\right),$$
where $\li(x)$ is the logarithm integral, and $c>0$ is a constant.
}
\end{exe}

\begin{exe}\label{exe3575} {\normalfont Let $\sigma: \N\longrightarrow \N$ be the sum of divisors function, let $\{x\}$ be the fractional function, and let $x\geq 1$ be a large number. Verify the asymptotic formula for the following finite sum.
$$\sum_{n\leq x} \left \{\frac{n}{\sigma(n)-1}\right \}=ax-\li(x)+O\left (e^{-c\sqrt{\log x}}\right),$$
where $\li(x)$ is the logarithm integral, and $a>0$ and $c>0$ are constants.
}
\end{exe}

\currfilename.\\


\begin{thebibliography}{998}
\bibitem{AP1976} Apostol, T. \textit{\color{blue} Introduction to analytic number theory}. Undergraduate Texts in Mathematics. Springer-Verlag, New York-Heidelberg, 1976.

\bibitem{BS2018} Bordelles, O., Heyman, R., Shparlinski, I. \textit{\color{blue} On a sum involving the Euler function}, http://arxiv.org/abs/1808.00188.
\bibitem{CN2021} Carella, N. \textit{\color{blue} Average Orders of the Euler Phi Function, The Dedekind Psi Function, The Sum of Divisors Function, And The Largest Integer Function.} http://arxiv.org/abs/2101.02248.


 





\bibitem{OT2021} Bordelles, O., Toth, L. \textit{\color{blue} Additive arithmetic functions meet the inclusion-exclusion principle.} http://arxiv.org/abs/2104.07443. 

\bibitem{PF2010} Pillichshammer, F. \textit{\color{blue} Euler's constant and averages of fractional parts}. Amer. Math. Monthly 117 (2010), no. 1, 78-83.

\bibitem{PS1941} Pillai, S. \textit{\color{blue} On a sum function connected with primitive roots.} Proc. Indian Acad. Sci. Sect. A. 13 (1941) 526-529. 


\bibitem{SL1976} Schoenfeld, L. \textit{\color{blue} Sharper bounds for the Chebyshev functions $\theta(x)$ and $\psi(x)$ II.} Math. comp. Vol. 30, No. 134, 337-360.

\bibitem{ZW2020} Zhai, W. \textit{\color{blue} On a sum involving the Euler function.} J. Number Theory 211 (2020), 199-219. 
\end{thebibliography}
\end{document}